 \newtheorem{thm}{Theorem}[section]
 \newtheorem{lem}[thm]{Lemma}
 \newtheorem{prop}[thm]{Proposition}
 \theoremstyle{definition}
 \theoremstyle{remark}
 \numberwithin{equation}{section}
\begin{document}

%-------------------------------------------------------------------------
% editorial commands: to be inserted by the editorial office
%
%\firstpage{1} \volume{228} \Copyrightyear{2004} \DOI{003-0001}
%
%
%\seriesextra{Just an add-on}
%\seriesextraline{This is the Concrete Title of this Book\br H.E. R and S.T.C. W, Eds.}
%
% for journals:
%
%\firstpage{1}
%\issuenumber{1}
%\Volumeandyear{1 (2004)}
%\Copyrightyear{2004}
%\DOI{003-xxxx-y}
%\Signet
%\commby{inhouse}
%\submitted{March 14, 2003}
%\received{March 16, 2000}
%\revised{June 1, 2000}
%\accepted{July 22, 2000}
%
%
%
%---------------------------------------------------------------------------
%Insert here the title, affiliations and abstract:
%

\title[]
 {Weighted composition semigroups on Banach spaces of holomorphic functions}

%----------Author 1
\author[Bernard]{E. Bernard}

\address{Universit\'e Gustave Eiffel, \\
 LAMA, (UMR 8050), UPEM, UPEC, \\
  CNRS, F-77454, Marne-la-Vall\'ee \\
  France}

\email{eddy.bernard@univ-eiffel.fr}

\thanks{}

%----------classification, keywords, date
\subjclass{}

\keywords{}

\date{September 21, 2021}
%----------additions
\dedicatory{}
%%% ----------------------------------------------------------------------

\begin{abstract}
We study, on certain Banach spaces $X$, families of weighted composition operators. Notably, we show that if this family forms a strongly continuous semigroup, then its infinitesimal generator ($\Gamma, D(\Gamma)$) is given by $\Gamma f = gf+Gf^\prime$ with $D(\Gamma) = \{ f\in X \ | \ gf+Gf^\prime\in X \}$ where $g, G$ are holomorphic functions. Moreover, our second main result is to study the reciprocal implication. That is, if $(\Gamma , D(\Gamma))$, defined as above, generates a strongly continuous semigroup, then this one is a family of weighted composition operators.
\end{abstract}

%%% ----------------------------------------------------------------------
\maketitle
%%% ----------------------------------------------------------------------
%\tableofcontents
\section{Introduction}

Let $\Omega$ be a non-empty open simply connected subset of $\mathbb{C}$ and let $X$ be a complex Banach space which embeds continuously in the Fr\'echet space of all holomorphic functions on $\Omega$, denoted by Hol($\Omega$).
Let $G\in$ Hol($\Omega$). In order to define a semiflow, we consider the following initial value problem:
\begin{equation}
\label{eqCauchy}
u^\prime(t)=G(u(t)) \ \text{ with } \ u(0)=z\in \Omega.
\end{equation}
By 
\begin{equation*}
\varphi(.,z) : [0, \tau(z)) \rightarrow \Omega,
\end{equation*}
we denote the unique maximal solution of \eqref{eqCauchy}. \\ We denote by $\Phi = (\varphi(t, .))_{0\leq t < \tau(.)}$ the semiflow generated by $G$. For $z\in \Omega$, we have
\begin{equation}
\label{semigroupP}
\varphi(t+s, z) = \varphi(t, \varphi(s, z))
\end{equation}
where $t+s<\tau(z)$ and $t<\tau(\varphi(s, z))$. We call the semiflow global if $\tau(z)= \infty$ for all $z\in \Omega$. In that case \eqref{semigroupP} holds for all $t, s \geq 0$ and $z\in \Omega$. Moreover, according to Theorem 2 in \cite[Chap.8, §7]{Hirsch}, $\varphi: [0, \infty) \times \Omega \rightarrow \Omega$ is continuous and according to Theorem 9 in \cite[Chap.1]{Hurewicz}, $\varphi(t, .) \in$  Hol($\Omega$) for all $t\geq 0$. Conversely, let $(\varphi_t)_{t\geq 0}$ be a family of analytic functions on $\Omega$ such that the map $t\mapsto \varphi_t(z)$ is continuous for every $z\in \Omega$, $\varphi_0(z)=z$ and $\varphi_{t+s}(z)=\varphi_t(\varphi_s(z))$ for all $z\in \Omega$ and $t, s\in [0, \infty)$. Berkson and Porta showed in \cite{BP} that there exists a holomorphic map $G: \Omega \rightarrow \mathbb{C}$ such that, for all $t\geq 0$, we have
\begin{equation*}
	\frac{d}{dt} \varphi_t(z)=G(\varphi_t(z)) \ \ \ \ (z\in \Omega),
\end{equation*}
and $(\varphi_t)_{t\geq 0}$ is the solution of the following Cauchy problem:
\begin{equation*}
	\left\lbrace \begin{aligned}
	  &\frac{d}{dt} \varphi_t(z)=G(\varphi_t(z)), &  (t\in \mathbb{R}_+, z\in \Omega)  \\
	& \varphi_0(z)=z, &  (z\in \Omega).
	 \end{aligned}\right. 
\end{equation*}	
Then, the family $(\varphi_t)_{t\geq 0}$ is a global semiflow generated by $G$. Furthermore, 
\begin{equation*}
G(z) = \underset{t\rightarrow 0^+}{\lim} \frac{\varphi_t(z)-z}{t}.
\end{equation*}

We now consider a family of weighted composition operators $(S_t)_{t\geq 0}$ on $X$ defined by:
\begin{equation}
\label{1}
S_tf = m_tf\circ \varphi_t,
\end{equation}
for all $t\geq 0$ and $f\in X$, where $(m_t)_{t\geq 0}$ is a family of holomorphic functions on $\Omega$. Several results are known on this kind of operators where the Banach space $X$ is the Hardy space $H^p(\mathbb{D})$ ($1\leq p < \infty$). See for example \cite{Siskakis86} and \cite{Konig}. Our first goal will be to generalize their results with a Banach space $X$ satisfying some conditions. In the next section, we will show that if $(S_t)_{t\geq 0}$ is a strongly continuous semigroup on $X$, then $(m_t)_{t\geq 0}$ is a differentiable cocycle for $\Phi$, that is \\

i) $m_0(z)=1$ for every $z\in \Omega$;\\
\indent
ii) $m_{t+s}(z)=m_t(z)m_s(\varphi_t(z))$ for all $z\in \Omega$ and $t,s\geq 0$;\\
\indent
iii) the map $t\mapsto m_t(z)$ is continuous for every $z\in \Omega$; \\
\indent
iv) the map $t\mapsto m_t(z)$ is differentiable for all $z\in \Omega$. \\
\\
\noindent
Note that K\"onig showed that a cocycle has no zero (see \cite[Lemma~2.1]{Konig}). Moreover, we also see that the infinitesimal generator ($\Gamma, D(\Gamma)$) of $(S_t)_{t\geq 0}$ is given by 
\begin{equation}
\label{2}
	\left\lbrace \begin{aligned}
	& \Gamma f = gf+Gf^\prime \\
	 & D(\Gamma) = \{ f\in X \ | \ gf+Gf^\prime\in X \},
	\end{aligned} \right.
	\end{equation}
where $g: \Omega \rightarrow \mathbb{C}$ is a holomorphic function. In this paper, basic results on strongly continuous semigroup will be used. For more details one can consult \cite{EngelNagel} or \cite{Pazy}. \\
\\
Then, we will study the converse, that is if the infinitesimal generator ($\Gamma, D(\Gamma)$) of $(S_t)_{t\geq 0}$ is of the form \eqref{2}, then the strongly continuous semigroup has the form \eqref{1}. Recently, two papers \cite{Gutierrez} and \cite{Chalendar} treat in parallel the case where $g=0$ for different conditions on $X$. They show that the strongly continuous semigroup has the form \eqref{1}, but with the weight $m_t=1$ for every $t\geq 0$. In this paper, we generalize this result for $g\neq 0$ with the same conditions used in \cite{Chalendar}. Moreover we comment on the recent generalization obtained in \cite{Gutierrez21}.\\
\\
\noindent
In the following sections we use the maps $e_n: \Omega \rightarrow \mathbb{C}$ ($n\in \mathbb{N}_0=\mathbb{N}\cup \{0\}$) defined by $e_n(z)=z^n$ for all $z\in \Omega$.

\section{Generator of weighted composition semigroups}

First of all, we show that if $\Omega$ is bounded, the maps $e_0, e_1 \in X$ and $(S_t)_{t\geq 0}$ is a strongly continuous semigroup on $X$, then 
\begin{equation}
\label{3}
	m_t(z)=\exp\left( \int_0^t g(\varphi_s(z))ds \right),
\end{equation}
where $g:=\left. \frac{\partial}{\partial t}m_t(z)\right|_{t=0}$ is analytic. The proof of \cite[Theorem~1]{Konig} for the Hardy spaces $H^p(\mathbb{D})$ $(1\leq p < \infty)$ is also valid in our more general situation. \\

\begin{prop}
\label{p2-1}
 Let $\Omega$ be a non-empty open and simply connected subset of $\mathbb{C}$ and let $X$ be a complex Banach space which embeds continuously in the Fr\'echet space of all holomorphic functions on $\Omega$. Furthermore, we suppose that $e_0, e_1 \in X$. If $(S_t)_{t\geq 0}$ defined by \eqref{1} is a strongly continuous semigroup on $X$, then $(m_t)_{t\geq 0}$ is a differentiable cocycle for $\Phi$, the map $g:=\left. \frac{\partial}{\partial t}m_t(z)\right|_{t=0}$ is analytic on $\Omega$ and 
	\begin{equation*}
	m_t(z)=\exp\left( \int_0^t g(\varphi_s(z))ds \right).
	\end{equation*} 
\end{prop}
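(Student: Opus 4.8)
The plan is to read off the cocycle identities from the semigroup axioms and the continuous embedding $X\hookrightarrow\mathrm{Hol}(\Omega)$, then to extract the infinitesimal weight $g$ from the generator on a dense set, and finally to integrate the resulting ordinary differential equation. First I would test the semigroup against the constant function $e_0\equiv 1\in X$. Since $S_0=I$, the identity $S_0e_0=e_0$ reads $m_0=e_0=1$, giving (i). Applying $S_{t+s}=S_tS_s$ to $e_0$ gives $m_{t+s}=S_tm_s=m_t\,(m_s\circ\varphi_t)$, that is $m_{t+s}(z)=m_t(z)m_s(\varphi_t(z))$, which is (ii). For (iii), strong continuity makes $t\mapsto S_te_0=m_t$ continuous in $X$; since the embedding is continuous and point evaluations are continuous on $\mathrm{Hol}(\Omega)$, the map $t\mapsto m_t(z)$ is continuous for each $z$. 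By K\"onig's lemma the cocycle $m_t$ never vanishes, a fact I will use when integrating.

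The heart of the argument, and the step I expect to be the main obstacle, is the existence and analyticity of $g(z):=\lim_{s\to0^+}(m_s(z)-1)/s$, since $e_0$ need not lie in $D(\Gamma)$. To get around this I would fix $z_0\in\Omega$ and use the density of $D(\Gamma)$ together with continuity of the evaluation $f\mapsto f(z_0)$ to choose $f\in D(\Gamma)$ with $f(z_0)\neq 0$; then $f\neq 0$ on a neighbourhood $U$ of $z_0$. Writing the difference quotient of the orbit as
\[
\frac{S_sf-f}{s}=\frac{m_s-1}{s}\,(f\circ\varphi_s)+\frac{f\circ\varphi_s-f}{s},
\]
the left-hand side converges to $\Gamma f$ in $X$, hence locally uniformly on $\Omega$, while $\tfrac{f\circ\varphi_s-f}{s}\to Gf'$ locally uniformly because $\tfrac{\varphi_s(w)-w}{s}=\tfrac{1}{s}\int_0^s G(\varphi_r(w))\,dr\to G(w)$ locally uniformly and $f$ is holomorphic. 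Solving for the remaining term on $U$, where $f\circ\varphi_s$ stays bounded away from $0$, shows that $(m_s-1)/s$ converges locally uniformly on $U$ to $(\Gamma f-Gf')/f$. By the Weierstrass theorem the limit $g$ is holomorphic, its value is independent of the test function $f$, and the right derivative of $t\mapsto m_t(z)$ at $t=0$ exists and equals $g(z)$.

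Finally I would propagate this differentiability and integrate. Differentiating the cocycle identity (ii) in $s$ at $s=0$ yields the right derivative $\tfrac{d}{dt}m_t(z)=m_t(z)\,g(\varphi_t(z))$; rewriting $m_t=m_{t-s}\,(m_s\circ\varphi_{t-s})$ and using the locally uniform convergence established above handles the left derivative, giving (iv) and showing that $t\mapsto m_t(z)$ is $C^1$ with $\tfrac{d}{dt}m_t(z)=m_t(z)\,g(\varphi_t(z))$. Since $m_t(z)\neq 0$ and $m_0(z)=1$, dividing by $m_t(z)$ and integrating gives $\log m_t(z)=\int_0^t g(\varphi_s(z))\,ds$, i.e.\ the claimed formula $m_t(z)=\exp\bigl(\int_0^t g(\varphi_s(z))\,ds\bigr)$.
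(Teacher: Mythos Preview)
The paper does not supply its own proof of this proposition; it simply remarks that K\"onig's proof for $H^p(\mathbb{D})$ carries over to the present setting. Your argument is a correct, self-contained rendition of exactly that proof: you test the semigroup on $e_0$ to obtain the cocycle identities and continuity, you circumvent the possible failure of $e_0\in D(\Gamma)$ by picking, via density of $D(\Gamma)$ and continuity of $\delta_{z_0}$, a test function $f\in D(\Gamma)$ nonvanishing near $z_0$ to extract $g=(\Gamma f-Gf')/f$ as a locally uniform limit of $(m_s-1)/s$, and then you propagate differentiability through the cocycle relation and integrate the resulting ODE. All steps are sound; in particular your computation $f\circ\varphi_s-f=\int_0^s (Gf')\circ\varphi_r\,dr$ gives the needed locally uniform convergence, and the locally uniform convergence of $(m_s-1)/s$ is precisely what is needed for the left-derivative argument at a general $t$. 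There is nothing to correct.
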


\leavevmode \\
	Now, we give a condition on $g$ such that $(S_t)_{t\geq 0}$ defined by \eqref{1} and \eqref{3} is a strongly continuous semigroup on $X$. The following theorem generalizes partially Theorem 2 of \cite{Konig} and Theorem 1 of \cite{Siskakis86} in the framework where $X$ is a uniformly convex Banach space. In their papers, only the Hardy spaces $H^p(\mathbb{D})$ ($1\leq p<\infty$) have been considered. Note that, for all $1<p<\infty$, the Hardy spaces $H^p(\mathbb{D})$ and the Bergman spaces $A^p(\mathbb{D})$ are uniformly convex Banach spaces and satisfy the conditions of the following theorem.  
	
\begin{thm}
\label{t2-2}
	Let $\Omega$ be a non-empty open and simply connected subset of $\mathbb{C}$ and let $X$ be a uniformly convex Banach space which embeds continuously in the Fr\'echet space of all holomorphic functions on $\Omega$. Let $g: \Omega \rightarrow \mathbb{C}$ be an analytic function satisfying $\underset{z\in \Omega}{\sup} \ \Re[g(z)] <\infty$. Let $(S_t)_{t\geq 0}$ defined by \eqref{1} and \eqref{3}. Suppose that
	\begin{equation*}
	\left\|hf \right\|_X \leq \left\| h \right\|_\infty \left\| f\right\|_X \ \text{ for every } f\in X \text{ and } h\in H^\infty
	\end{equation*}
	 and
	\begin{equation*}
	\left\|f \circ \varphi_t \right\|_X \leq C_t \left\|  f\right\|_X \ \text{ for all } t\geq 0 \text{ and } f\in X,
	\end{equation*}
     where $C_t$ is a continuous function with respect to $t$, with positive real values and satisfying $\underset{t\rightarrow 0^+}{\lim} C_t =1$. Then, $(S_t)_{t\geq 0}$ is a strongly continuous semigroup on $X$.
\end{thm}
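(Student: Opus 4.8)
The plan is to verify the three defining properties of a strongly continuous semigroup in turn: boundedness of each operator, the algebraic semigroup law, and strong continuity at the origin; the last of these is where the uniform convexity hypothesis does the essential work.

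First I would establish that each $S_t$ is bounded and that $t\mapsto\|S_t\|$ is locally bounded near $0$. Writing $M:=\sup_{z\in\Omega}\Re[g(z)]<\infty$, from \eqref{3} one has $|m_t(z)|=\exp\!\big(\int_0^t\Re[g(\varphi_s(z))]\,ds\big)\le e^{Mt}$ for all $z\in\Omega$; since $\int_0^t g(\varphi_s(\cdot))\,ds$ is holomorphic (being the integral of a holomorphic family), $m_t$ is a bounded holomorphic function, i.e. $m_t\in H^\infty$ with $\|m_t\|_\infty\le e^{Mt}$. Then, for $f\in X$, the composition bound gives $f\circ\varphi_t\in X$ with $\|f\circ\varphi_t\|_X\le C_t\|f\|_X$, and the multiplier bound gives $\|S_tf\|_X=\|m_t\,(f\circ\varphi_t)\|_X\le\|m_t\|_\infty\,\|f\circ\varphi_t\|_X\le e^{Mt}C_t\,\|f\|_X$. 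As $C_t$ is continuous with $C_t\to1$, this yields $\|S_t\|\le e^{Mt}C_t$, so $\sup_{t\in[0,1]}\|S_t\|<\infty$ and $\limsup_{t\to0^+}\|S_t\|\le1$. Next I would check the semigroup law: from $\varphi_0=\mathrm{id}_\Omega$ and $m_0\equiv1$ one gets $S_0=I$, and for $S_{t+s}=S_tS_s$ I would compute $(S_tS_sf)(z)=m_t(z)\,m_s(\varphi_t(z))\,f(\varphi_s(\varphi_t(z)))$, where the cocycle relation $m_t(z)m_s(\varphi_t(z))=m_{t+s}(z)$ and the semiflow identity \eqref{semigroupP} $\varphi_s(\varphi_t(z))=\varphi_{t+s}(z)$ reduce the right-hand side to $(S_{t+s}f)(z)$; both facts are immediate from \eqref{3} by additivity of the integral.

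The crux is strong continuity, for which it suffices to prove $\|S_tf-f\|_X\to0$ as $t\to0^+$ for each $f\in X$; combined with the local boundedness above and the semigroup law, the standard characterization of $C_0$-semigroups (e.g. \cite{EngelNagel}) then gives the result. I would argue in three steps. (Pointwise) Since $t\mapsto m_t(z)$ and $t\mapsto\varphi_t(z)$ are continuous with $m_0(z)=1$, $\varphi_0(z)=z$, and $f$ is holomorphic, $(S_tf)(z)=m_t(z)f(\varphi_t(z))\to f(z)$ for every $z\in\Omega$. (Weak) Uniform convexity makes $X$ reflexive, and the continuous embedding $X\hookrightarrow\mathrm{Hol}(\Omega)$ makes each point evaluation $\delta_z$ a bounded functional on $X$. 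Given any sequence $t_n\to0^+$, the set $\{S_{t_n}f\}$ is bounded, hence has a weakly convergent subsequence whose weak limit $h$ satisfies $h(z)=\lim\delta_z(S_{t_n}f)=f(z)$ for all $z$, so $h=f$; as every weakly convergent subsequence has limit $f$, we conclude $S_tf\rightharpoonup f$ as $t\to0^+$. (Norm) The operator bound gives $\limsup_{t\to0^+}\|S_tf\|_X\le\|f\|_X$, while weak lower semicontinuity of the norm gives $\|f\|_X\le\liminf_{t\to0^+}\|S_tf\|_X$; hence $\|S_tf\|_X\to\|f\|_X$.

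Finally, I would invoke the Radon--Riesz (Kadec--Klee) property of uniformly convex spaces: since $S_tf\rightharpoonup f$ and $\|S_tf\|_X\to\|f\|_X$, it follows that $S_tf\to f$ in norm, establishing strong continuity at $0$ and hence that $(S_t)_{t\ge0}$ is a strongly continuous semigroup on $X$. I expect the main obstacle to be precisely this passage from pointwise to strong convergence: pointwise (locally uniform) convergence does not by itself control the $X$-norm, and the only leverage available is the elementary estimate $\limsup_{t\to0^+}\|S_tf\|_X\le\|f\|_X$ together with reflexivity; it is the geometric Radon--Riesz property, unavailable in a general Banach space, that converts the resulting weak-plus-norm convergence into the required strong convergence, which is exactly why the uniform convexity hypothesis is indispensable here.
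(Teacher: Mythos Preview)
Your proof is correct and follows essentially the same approach as the paper: bound $\|m_t\|_\infty\le e^{Mt}$ to get $\|S_t\|\le e^{Mt}C_t$, use reflexivity (via Milman--Pettis) together with the point evaluations to upgrade pointwise convergence to weak convergence $S_tf\rightharpoonup f$, combine weak lower semicontinuity with the operator bound to obtain $\|S_tf\|_X\to\|f\|_X$, and finally invoke the Radon--Riesz property of uniformly convex spaces. If anything, your version is slightly more careful than the paper's, since you explicitly argue that every subsequence has a further subsequence converging to $f$, hence the full limit $S_tf\to f$ holds, whereas the paper's write-up literally only treats one subsequence.
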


\begin{proof} By definition of $(S_t)_{t\geq 0}$, we have $S_0=Id_X$ and $S_{t+s}=S_t \circ S_s$ for all $s, t \in [0, \infty)$. Moreover, since $\underset{z\in \Omega}{\sup} \ \Re[g(z)] < M$ for some $M\in \mathbb{R}$, we have
	
	\begin{align*}
	\left\| m_t \right\|_\infty &= \left\| \exp\left( \int_0^t \Re g(\varphi_s(z))ds \right) \right\|_\infty \\
	& =\underset{z\in \Omega}{\sup} \exp\left( \int_0^t \Re g(\varphi_s(z))ds \right) \leq e^{Mt}.
	\end{align*}
	
\noindent Hence, $\underset{t\rightarrow 0^+}{\limsup} \left\| m_t \right\|_\infty \leq 1$ and by a generalization of \cite[Lemma~2.1]{Konig} to $\Omega$, open and simply connected, we get that $m_t$ is bounded for all $t\geq 0$. Let $f\in X$. By hypothesis, we have
	\begin{equation}
	\label{star}
	\left\|S_tf \right\|_X \leq \left\| m_t \right\|_\infty \left\| f \circ \varphi_t\right\|_X \leq C_t \left\| m_t \right\|_\infty \left\| f \right\|_X < \infty. 
	\end{equation} 
	Therefore, $S_t: X\rightarrow X$ is well-defined and $\{\left\| S_t \right\|  \ : \ t\in [0,1]  \}$ is bounded. \\
	\\
	Let $(t_n)_{n\in \mathbb{N}}$ be a sequence in $[0,1]$ which converges to $0$. Then $(S_{t_n}f)_n$ is a bounded sequence in $X$. From Milman-Pettis Theorem, since $X$ is a uniformly convex Banach space, $X$ is a reflexive space. Thus, since each bounded sequence in $(X, \left\|.  \right\|_X )$ possesses a sub-sequence which converges weakly, there exists an increasing map $\psi : \mathbb{N} \rightarrow \mathbb{N}$ such that $S_{t_{\psi(n)}}f \rightharpoonup h \in X$. However, by hypothesis, $X\hookrightarrow Hol(\Omega)$, that is the evaluations $\delta_z: X\rightarrow \mathbb{C}$ which map $f$ to $f(z)$ are continuous linear forms. Then, by definition, we have $S_{t_{\psi(n)}}f(z) \overset{n\rightarrow +\infty}{\longrightarrow} h(z)$ for all $z\in \Omega$. However, $S_{t_{\psi(n)}}f(z) \overset{n\rightarrow +\infty}{\longrightarrow} f(z)$. Therefore, $S_{t_{\psi(n)}}f \rightharpoonup f$. So, $\left( \left\| S_{t_{\psi(n)}}f \right\|_X  \right)_{n\geq 0}$ is bounded and $\left\| f \right\|_X \leq \underset{n\rightarrow +\infty}{\liminf} \left\| S_{t_{\psi(n)}}f \right\|_X$. Furthermore, from Inequality \eqref{star} and the fact that \\
	$\underset{t\rightarrow 0^+}{\limsup} \left\| m_t \right\|_\infty \leq 1$ and $\underset{t\rightarrow 0^+}{\lim} C_t =1$, we have
	\begin{equation*}
	\underset{n\rightarrow +\infty}{\limsup} \left\| S_{t_{\psi(n)}}f \right\|_X \leq \left\| f \right\|_X.
	\end{equation*}
	Hence, $\underset{n\rightarrow +\infty}{\lim} \left\| S_{t_{\psi(n)}}f \right\|_X = \left\| f \right\|_X$. Moreover, since $X$ is a uniformly convex Banach space, $S_{t_{\psi(n)}}f \rightharpoonup f$ and $\underset{n\rightarrow +\infty}{\lim} \left\| S_{t_{\psi(n)}}f \right\|_X = \left\| f \right\|_X$, we have $S_{t_{\psi(n)}}f \overset{X}{\rightarrow} f$. Hence, $\underset{n\rightarrow +\infty}{\lim} \left\|S_{t_{\psi(n)}}f -f \right\|_X = 0$ and thus $(S_t)_{t\geq 0}$ is a strongly continuous semigroup on $X$.
	\end{proof}

\noindent The following theorem describes the generator of a weighted composition semigroup. The proof of \cite[Theorem~2]{Siskakis86} for the Hardy spaces $H^p(\mathbb{D})$ $(1\leq p < \infty)$ is also valid in our more general situation. \\
	
\begin{thm}
\label{t2-3}	
Let $X$ be a complex Banach space which embeds continuously in Hol($\mathbb{D}$). Moreover, we suppose that $e_0, e_1 \in X$. If $(S_t)_{t\geq 0}$ defined by \eqref{1}, is a strongly continuous semigroup on $X$, then, 
	\begin{equation*}
	m_t(z)=\exp\left( \int_0^t g(\varphi_s(z))ds \right)
	\end{equation*}
	with $g: \mathbb{D} \rightarrow \mathbb{C}$ holomorphic and the infinitesimal generator ($\Gamma, D(\Gamma))$ of $(S_t)_{t\geq 0}$ is given by
	\begin{equation*}
	\left\lbrace \begin{aligned}
	&\Gamma f = gf+Gf^\prime \\
	&D(\Gamma) = \{ f\in X \ | \ gf+Gf^\prime\in X \},
	\end{aligned} \right. 
	\end{equation*}
	 where $G$ is the generator of $(\varphi_t)_{t\geq 0}$.
\end{thm}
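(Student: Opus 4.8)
The first assertion---the formula for $m_t$ together with the analyticity of $g$---is immediate from Proposition~\ref{p2-1}, since $\mathbb{D}$ is bounded, $e_0,e_1\in X$, and $(S_t)_{t\geq 0}$ is by hypothesis a strongly continuous semigroup. So the real task is to identify the generator. Throughout, write $A$ for the operator $Af=gf+Gf'$ with $D(A)=\{f\in X\ |\ gf+Gf'\in X\}$; the goal is to prove $\Gamma=A$.

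\emph{Step 1: $\Gamma\subseteq A$.} I would first pin down $\Gamma f$ pointwise. Fix $f\in D(\Gamma)$. Since $X\hookrightarrow\mathrm{Hol}(\mathbb{D})$, every evaluation $\delta_z$ is continuous, so convergence of $(S_tf-f)/t$ to $\Gamma f$ in $X$ forces, for each $z\in\mathbb{D}$,
\[
(\Gamma f)(z)=\lim_{t\to 0^+}\frac{m_t(z)f(\varphi_t(z))-f(z)}{t}.
\]
The map $t\mapsto m_t(z)f(\varphi_t(z))$ is differentiable at $t=0$: from the formula for $m_t$ one computes that $t\mapsto m_t(z)$ has derivative $g(z)$ at $t=0$, while $t\mapsto\varphi_t(z)$ has derivative $G(z)$ at $t=0$ since $G$ generates $(\varphi_t)_{t\geq 0}$. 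The product and chain rules then give $(\Gamma f)(z)=g(z)f(z)+G(z)f'(z)$ for every $z$. Hence $\Gamma f=gf+Gf'$ as holomorphic functions; in particular $gf+Gf'=\Gamma f\in X$, so $f\in D(A)$ and $Af=\Gamma f$. This proves $\Gamma\subseteq A$.

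\emph{Step 2: $D(A)\subseteq D(\Gamma)$.} Here I would use the standard principle that an extension of a semigroup generator coincides with it as soon as no spectral room is left. Since $\Gamma$ generates a $C_0$-semigroup, there are $M\geq 1$ and $\omega\in\mathbb{R}$ with $\|S_t\|\leq Me^{\omega t}$, and the half-plane $\{\lambda:\Re\lambda>\omega\}$ lies in $\rho(\Gamma)$. Fix such a $\lambda$. Given $f\in D(A)$, pick $\tilde f\in D(\Gamma)$ with $(\lambda-\Gamma)\tilde f=(\lambda-A)f$; using $\Gamma\subseteq A$ this yields $(\lambda-A)(f-\tilde f)=0$. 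Thus it suffices to show that $\lambda-A$ is injective, for then $f=\tilde f\in D(\Gamma)$, giving $\Gamma=A$.

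\emph{Step 3: injectivity of $\lambda-A$ (the crux).} Suppose $f\in D(A)$ satisfies $gf+Gf'=\lambda f$. For fixed $z$ put $u(t):=(S_tf)(z)=m_t(z)f(\varphi_t(z))$. Differentiating in $t$ and using that $m_t(z)$ and $\varphi_t(z)$ have $t$-derivatives $g(\varphi_t(z))m_t(z)$ and $G(\varphi_t(z))$ respectively, the eigenrelation $gf+Gf'=\lambda f$ evaluated at $\varphi_t(z)$ collapses the derivative to $u'(t)=\lambda u(t)$, with $u(0)=f(z)$. Hence $(S_tf)(z)=e^{\lambda t}f(z)$ for all $z$ and $t$, i.e. $S_tf=e^{\lambda t}f$ in $X$. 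But then $e^{\Re\lambda\,t}\|f\|_X=\|S_tf\|_X\leq Me^{\omega t}\|f\|_X$ for all $t\geq 0$, which is impossible for $\Re\lambda>\omega$ unless $f=0$. This establishes injectivity and completes the argument. The main obstacle is precisely this step: one must recognize that a nonzero element of $\ker(\lambda-A)$ would be a genuine eigenvector of the semigroup and then exploit the exponential bound. Working with $u(t)=(S_tf)(z)$ directly, rather than solving the complex ODE $Gf'=(\lambda-g)f$ by quadrature, is what lets me sidestep the zeros of $G$.
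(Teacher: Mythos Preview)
Your argument is correct. The paper does not actually supply a proof of this theorem but simply remarks that Siskakis's proof for $H^p(\mathbb{D})$ carries over; the method the paper itself uses for the analogous domain equality in Theorem~\ref{t3-5} is, however, different from yours. There, the reverse inclusion $D(A)\subseteq D(\Gamma)$ is obtained via an integral identity: for $f\in X$ with $h:=gf+Gf'\in X$ one differentiates $s\mapsto m_s(z)f(\varphi_s(z))$, finds the derivative equals $(S_sh)(z)$, integrates to get $S_tf-f=\int_0^t S_sh\,ds$ in $X$, and uses strong continuity to conclude $\tfrac{1}{t}(S_tf-f)\to h$. Your route instead invokes the abstract principle that an extension of a generator coincides with it once $\lambda-A$ is injective for some $\lambda\in\rho(\Gamma)$, and you verify injectivity by turning a putative eigenvector of $A$ into a genuine eigenvector of the semigroup, contradicting the growth bound. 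The integral approach is direct and constructive; your spectral approach is a reusable template that touches the analytic structure of $m_t$ and $\varphi_t$ only through the pointwise ODE $u'=\lambda u$. Both are standard and equally short here.
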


	Next we give conditions on $X$ so that the generator $\Gamma$ is continuous if and only if $G\equiv 0$. 
	
	\begin{thm}
		\vspace{0.2cm}
		\label{C2-2-1-3a}
		\noindent
		Let $X$ be a complex Banach space which embeds continuously in Hol($\mathbb{D}$). Consider the family $(S_t)_{t\geq 0}$ defined by \eqref{1}, where the semiflow $\Phi$ has $G$ for generator. Suppose that $(S_t)_{t\geq 0}$ is a strongly continuous semigroup on $X$, $e_n \in D(\Gamma)$ for all $n\in \mathbb{N}_0$ where $\Gamma$ is the generator of $(S_t)_{t\geq 0}$. Moreover, we suppose that: \\
		\indent
		a) $\exists C > 0$ such that $\Vert  hf\Vert  _X\leq C\Vert  h\Vert  _\infty . \Vert  f\Vert  _X$ ($h\in H^\infty(\mathbb{D}), f\in X$) \\
		\indent
		b)  $e_nf\in X$ ($f\in X, n\in \mathbb{N}$) \\
		\indent
		c) $\exists C_n > 0$ such that $C_n \Vert  f\Vert  _X \leq \Vert  e_nf\Vert  _X$ ($f\in X, n\in \mathbb{N}$) \\
		\indent
		d) $\underset{n\rightarrow +\infty}{\lim} \frac{\Vert  e_n\Vert  _X}{nC_n} = 0$. \\
		Then,
		\begin{equation*}
		m_t(z)=\exp\left( \int_0^t g(\varphi_s(z))ds \right)
		\end{equation*}
		where $g: \mathbb{D} \rightarrow \mathbb{C}$ is a holomorphic function. If $g$ is bounded, then $\Gamma$ is continuous if and only if $G\equiv 0$. 	
	\end{thm}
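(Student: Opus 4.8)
The plan is to read off the exponential formula for $m_t$ and the shape of the generator directly from Theorem~\ref{t2-3}, and then to treat the equivalence ``$\Gamma$ continuous $\iff G\equiv 0$'' by hand. First I note that the hypothesis $e_n\in D(\Gamma)$ for every $n\in\mathbb{N}_0$ forces $e_0,e_1\in X$, so Theorem~\ref{t2-3} applies and yields both $m_t(z)=\exp\bigl(\int_0^t g(\varphi_s(z))\,ds\bigr)$ with $g$ holomorphic and $\Gamma f=gf+Gf'$ on $D(\Gamma)=\{f\in X: gf+Gf'\in X\}$. Before anything else I would record that $g,G\in X$: since $e_0'=0$ and $e_1'=e_0$, one has $\Gamma e_0=g$ and $\Gamma e_1=ge_1+G$, and as $e_0,e_1\in D(\Gamma)$ both left-hand sides lie in $X$; condition (b) gives $ge_1\in X$, whence $G=\Gamma e_1-ge_1\in X$ as well.

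For the easy implication, suppose $G\equiv 0$. Then $\Gamma f=gf$, and if $g$ is bounded then $g\in H^\infty(\mathbb{D})$, so condition (a) gives $\|gf\|_X\le C\|g\|_\infty\|f\|_X<\infty$ for every $f\in X$. Hence $D(\Gamma)=X$ and $\Gamma$ is a bounded operator, in particular continuous.

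The substance is the converse: assuming $g$ bounded and $\Gamma$ continuous, I want $G\equiv 0$. I argue by contradiction, so suppose $G\not\equiv 0$; since $G\in X$ this means $\gamma:=\|G\|_X>0$. Continuity of $\Gamma$ gives an $M$ with $\|\Gamma f\|_X\le M\|f\|_X$ on $D(\Gamma)$. Applying this to $e_n$ and using $e_n'=ne_{n-1}$, so that $\Gamma e_n=ge_n+nGe_{n-1}$, together with condition (a) for the bound $\|ge_n\|_X\le C\|g\|_\infty\|e_n\|_X$, yields
\begin{equation*}
n\,\|Ge_{n-1}\|_X\le \|\Gamma e_n\|_X+\|ge_n\|_X\le (M+C\|g\|_\infty)\,\|e_n\|_X=:C'\|e_n\|_X.
\end{equation*}
The key point, and the one genuine obstacle, is that this controls $\|Ge_{n-1}\|_X$ whereas condition (d) is phrased with the matching index $C_n$; to realign the indices I would multiply by $e_1$. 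Since $Ge_n=e_1\,(Ge_{n-1})$ with $Ge_{n-1}\in X$ by condition (b), and $\|e_1\|_\infty=1$, condition (a) gives $\|Ge_n\|_X\le C\|Ge_{n-1}\|_X$, while condition (c) with $f=G$ gives $\|Ge_n\|_X\ge C_n\|G\|_X=C_n\gamma$. Combining the three inequalities,
\begin{equation*}
\|e_n\|_X\ge \frac{n}{C'}\|Ge_{n-1}\|_X\ge \frac{n}{CC'}\|Ge_n\|_X\ge \frac{\gamma}{CC'}\,n\,C_n,
\end{equation*}
so that $\|e_n\|_X/(nC_n)\ge \gamma/(CC')>0$ for all $n$, contradicting condition (d). Hence $G\equiv 0$, which completes the equivalence.
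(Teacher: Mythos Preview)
Your proof is correct and follows essentially the same strategy as the paper's: invoke Theorem~\ref{t2-3} for the exponential formula and the shape of $\Gamma$, then test $\Gamma$ on $e_n$ and combine conditions (a), (c), (d) to force $\|G\|_X=0$. The only difference is cosmetic: the paper factors $\Gamma e_n=(ge_1+nG)e_{n-1}$ and applies (c) at index $n-1$ directly, whereas your extra multiplication by $e_1$ to pass from $Ge_{n-1}$ to $Ge_n$ realigns the index so that (d) applies verbatim with $C_n$ rather than $C_{n-1}$.
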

	
	\begin{proof}
		\vspace{0.2cm}
		\noindent
		According to Theorem \ref{t2-3}, the infinitesimal generator ($\Gamma, D(\Gamma))$ of $(S_t)_{t\geq 0}$ is given by
		\begin{equation*}
		\left\lbrace \begin{array}{l}
		\Gamma f = gf+Gf^\prime \\
		D(\Gamma) = \{ f\in X \ | \ gf+Gf^\prime\in X \}.
		\end{array} \right. 
		\end{equation*}
		Suppose that $G\equiv 0$. Then, $\Vert  \Gamma f\Vert  _X =\Vert  gf\Vert  _X \leq C \Vert  g\Vert  _\infty . \Vert  f\Vert  _X \leq C_1 \Vert  f\Vert  _X$. Hence, $\Gamma$ is continuous. \\
		\\
		Suppose that $\Gamma$ is continuous. Since $e_0 \in D(\Gamma)$, we have $\Vert  \Gamma e_0\Vert  _X = \Vert  g\Vert  _X \leq \Vert  \Gamma \Vert  .\Vert  e_0\Vert  _X$. Hence, $g\in X$. Since $e_1 \in D(\Gamma)$, we have $\Vert  G\Vert  _X \leq \Vert  \Gamma\Vert  .\Vert  e_1\Vert  _X+\Vert  ge_1\Vert  _X\leq \Vert  \Gamma\Vert  .\Vert  e_1\Vert  _X+\Vert  g\Vert  _\infty.\Vert  e_1\Vert  _X$. Hence, $G\in X$. Since $e_n \in D(\Gamma)$, by our hypotheses, we have 
		$$ C_{n-1}\Vert  ge_1+nG\Vert  _X \leq \Vert  (ge_1+nG)e_{n-1}\Vert  _X = \Vert  ge_n+nGe_{n-1}\Vert  _X\leq \Vert  \Gamma\Vert  .\Vert  e_n\Vert  _X.$$ 
		Thus, 
		$$ \Vert  G\Vert  _X \leq \Vert  \Gamma\Vert  . \frac{\Vert  e_n\Vert  _X}{nC_{n-1}}+ \frac{\Vert  g\Vert  _\infty.\Vert  e_1\Vert  _X}{n}.$$
		Taking the limit $n\rightarrow +\infty$, we get that $G\equiv 0$.
		\vspace{0.2cm}
	\end{proof}
	
	For instance, we can consider the Hardy space $H^2(\mathbb{D})$. Since $\Vert  e_nf\Vert  _2 = \Vert  f\Vert  _2$ and $\Vert  e_n\Vert  _2=1$ for all $n\geq 0$, the space $H^2(\mathbb{D})$ sastifies all the conditions of Theorem \ref{C2-2-1-3a}. We can also consider weighted Dirichlet spaces $D_\alpha$ with $\alpha \in \mathbb{R}_+$. Recall that $D_\alpha$
	is the space of analytic function $f$ on $\mathbb{D}$ such that
	$$ \int_\mathbb{D} |f^\prime(z)|^2 (1-|z|^2)^\alpha \frac{dA(z)}{\pi} < \infty.$$
	First of all, we recall the following result.
	
	\begin{prop}
	\vspace{0.2cm}
	\label{C2-2-1-3n2}
	\noindent
Let $\alpha \in \mathbb{R}_+$. Let $w_n := 2n^2 \int_0^1 r^{2n-1}w(r)dr = n \frac{n!}{(\alpha + 1)...(\alpha+n)}$ for all $n\geq 1$. If $f=\underset{n\geq 0}{\sum} a_n z^n \in D_\alpha$, then
	$$\Vert f\Vert^2_{D_\alpha} = \underset{n\geq 0}{\sum} w_n |a_n|^2.$$
	Moreover, $D_\alpha$ embeds continuously in $Hol(\mathbb{D})$.
\end{prop}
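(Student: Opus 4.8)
The plan is to compute the norm by expanding $f$ in its Taylor series and integrating in polar coordinates, and then to read off the continuous embedding from the growth rate of the weights $w_n$. Throughout, $w(r)$ denotes the radial weight $(1-r^2)^\alpha$, and I adopt the convention $w_0:=1$, so that the constant term of $f$ is accounted for (this is what turns the seminorm coming from $f'$ into a genuine norm, since the displayed integral only sees $f'$ and the announced formula for $w_n$ at $n=0$ returns $0$).

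First I would fix $0<R<1$ and work on the disc $\{|z|\le R\}$. Writing $f(z)=\sum_{n\ge0}a_nz^n$, so that $f'(re^{i\theta})=\sum_{n\ge1}na_nr^{n-1}e^{i(n-1)\theta}$ converges uniformly on each circle, Parseval's identity gives $\frac{1}{2\pi}\int_0^{2\pi}|f'(re^{i\theta})|^2\,d\theta=\sum_{n\ge1}n^2|a_n|^2r^{2n-2}$. Integrating against $(1-r^2)^\alpha r$ over $[0,R]$ and interchanging sum and integral by Tonelli (all terms are nonnegative) yields $\frac1\pi\int_{|z|\le R}|f'|^2(1-|z|^2)^\alpha\,dA=2\sum_{n\ge1}n^2|a_n|^2\int_0^Rr^{2n-1}(1-r^2)^\alpha\,dr$. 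Letting $R\to1^-$, monotone convergence on both sides produces $\|f\|_{D_\alpha}^2=\sum_{n\ge0}w_n|a_n|^2$ with $w_n=2n^2\int_0^1r^{2n-1}(1-r^2)^\alpha\,dr$ for $n\ge1$. The substitution $u=r^2$ then turns this radial integral into $\tfrac12\int_0^1u^{n-1}(1-u)^\alpha\,du=\tfrac12 B(n,\alpha+1)=\tfrac12\frac{\Gamma(n)\Gamma(\alpha+1)}{\Gamma(n+\alpha+1)}$, where $B$ is the Euler Beta integral, and the elementary identities $\Gamma(n)=(n-1)!$ and $\Gamma(n+\alpha+1)=\Gamma(\alpha+1)\prod_{k=1}^n(\alpha+k)$ collapse $w_n=n^2\frac{\Gamma(n)\Gamma(\alpha+1)}{\Gamma(n+\alpha+1)}$ to the stated closed form $w_n=n\frac{n!}{(\alpha+1)\cdots(\alpha+n)}$.

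Finally, for the continuous embedding into $\mathrm{Hol}(\mathbb{D})$ I would control point evaluations. By Cauchy--Schwarz, for $|z|\le\rho<1$, $|f(z)|\le\sum_{n\ge0}|a_n|\rho^n\le\big(\sum_{n\ge0}w_n|a_n|^2\big)^{1/2}\big(\sum_{n\ge0}\rho^{2n}/w_n\big)^{1/2}=C_\rho\|f\|_{D_\alpha}$, provided the auxiliary series converges. This is the one place where an estimate is genuinely required: Stirling's formula gives $\frac{n!}{(\alpha+1)\cdots(\alpha+n)}=\frac{\Gamma(n+1)\Gamma(\alpha+1)}{\Gamma(n+\alpha+1)}\sim\Gamma(\alpha+1)\,n^{-\alpha}$, hence $w_n\sim\Gamma(\alpha+1)\,n^{1-\alpha}$; consequently $1/w_n$ grows only polynomially in $n$, and the geometric factor $\rho^{2n}$ forces $\sum_{n}\rho^{2n}/w_n<\infty$ for every $\rho<1$. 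Since every compact subset of $\mathbb{D}$ lies in some $\{|z|\le\rho\}$, the uniform bound $\sup_{|z|\le\rho}|f(z)|\le C_\rho\|f\|_{D_\alpha}$ shows that the inclusion is continuous for the Fréchet topology of $\mathrm{Hol}(\mathbb{D})$.

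I expect the main obstacle to be exactly this last growth estimate for $w_n$ together with the justification of the Tonelli and monotone-convergence interchanges in the first step; once those are in place, the remaining work is routine bookkeeping with the Beta and Gamma functions.
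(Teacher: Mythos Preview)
The paper does not actually prove this proposition: it introduces it with ``First of all, we recall the following result'' and states it without argument, so there is no paper proof to compare against. Your proof is correct and is the standard one. The Parseval--Tonelli computation, the Beta/Gamma evaluation of $w_n$, and the Cauchy--Schwarz bound $|f(z)|\le C_\rho\|f\|_{D_\alpha}$ via the polynomial growth of $1/w_n$ are all valid as written. Your explicit convention $w_0:=1$ is exactly what is needed to make the displayed identity a genuine norm (the paper's statement leaves $w_0$ undefined while summing from $n\ge 0$), and your asymptotic $w_n\sim\Gamma(\alpha+1)\,n^{1-\alpha}$ matches the Stirling estimate the paper itself invokes in the subsequent Proposition~\ref{C2-2-1-3n7}.
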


Next, we prove the following proposition.
	
	\begin{prop}
		\vspace{0.2cm}
		\label{C2-2-1-3n7}
		\noindent
		Let $\alpha \in \mathbb{R}_+$. For all $f\in D_\alpha$ and $k\in \mathbb{N}$, $e_kf\in D_\alpha$ and there exists $C>0$ such that $C \Vert  f\Vert  _{D_\alpha} \leq \Vert  e_kf\Vert  _{D_\alpha}$ satisfying $\underset{k\rightarrow +\infty}{\lim} \frac{\Vert  e_k\Vert  _{D_\alpha}}{kC} = 0$.
	\end{prop}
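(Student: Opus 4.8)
The plan is to pass to Taylor coefficients and reduce every assertion to the behaviour of the weight sequence $(w_n)$. Writing $f=\sum_{n\ge 0}a_n e_n\in D_\alpha$, one has $e_kf=\sum_{n\ge 0}a_n e_{n+k}$, so Proposition \ref{C2-2-1-3n2} gives
\[
\|e_kf\|_{D_\alpha}^2=\sum_{n\ge 0}w_{n+k}|a_n|^2,\qquad \|f\|_{D_\alpha}^2=\sum_{n\ge 0}w_n|a_n|^2,
\]
together with $\|e_k\|_{D_\alpha}^2=w_k$ (here $w_0=1$, the contribution of the constant term). Thus the membership $e_kf\in D_\alpha$, the existence of the constant $C$, and the asserted limit all become elementary statements about the ratios $w_{n+k}/w_n$.

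First I would record two facts about $(w_n)$. From the closed form $w_n=\frac{n\,n!}{(\alpha+1)(\alpha+2)\cdots(\alpha+n)}$ and Stirling's formula, $w_n$ grows like $n^{1-\alpha}$; precisely, there are constants $0<c_1\le c_2$, depending only on $\alpha$, with $c_1\,n^{1-\alpha}\le w_n\le c_2\,n^{1-\alpha}$ for all $n\ge 1$. Second, the same closed form yields, for $n\ge 1$, the factorisation
\[
\frac{w_{n+k}}{w_n}=\frac{n+k}{n}\prod_{j=1}^{k}\frac{n+j}{n+j+\alpha},
\]
each factor of the product lying in $(0,1]$. Since $\frac{n+k}{n}\le 1+k$ for $n\ge 1$ and $\frac{w_k}{w_0}=w_k$ for $n=0$, the ratio $w_{n+k}/w_n$ is bounded above by $\max(1+k,w_k)$, whence $\|e_kf\|_{D_\alpha}^2\le \max(1+k,w_k)\,\|f\|_{D_\alpha}^2<\infty$; this proves $e_kf\in D_\alpha$.

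For the constant I would set $\rho_k:=\inf_{n\ge 0}w_{n+k}/w_n$. Each factor above tends to $1$ as $n\to\infty$, so $w_{n+k}/w_n\to 1$; being a sequence of positive numbers with positive limit, it is bounded below by a positive number, hence $\rho_k>0$ and $C:=\sqrt{\rho_k}$ satisfies $C\|f\|_{D_\alpha}\le\|e_kf\|_{D_\alpha}$. It remains to check $\|e_k\|_{D_\alpha}/(kC)\to 0$, which is where the real work lies and needs a lower bound on $\rho_k$. Using the two-sided bounds, for $n\ge 1$ I get $w_{n+k}/w_n\ge (c_1/c_2)(1+k/n)^{1-\alpha}\ge (c_1/c_2)\min\big(1,(1+k)^{1-\alpha}\big)$, while for $n=0$ one has $w_k\ge c_1 k^{1-\alpha}$; since $(1+k)^{1-\alpha}$ and $k^{1-\alpha}$ are comparable, these give $\rho_k\ge c'\min(1,k^{1-\alpha})$ for some $c'>0$. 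Combining with $\|e_k\|_{D_\alpha}=\sqrt{w_k}\le\sqrt{c_2}\,k^{(1-\alpha)/2}$ yields
\[
\frac{\|e_k\|_{D_\alpha}}{kC}\le \sqrt{\tfrac{c_2}{c'}}\;\frac{\max\!\big(k^{(1-\alpha)/2},1\big)}{k},
\]
which tends to $0$ because $(1-\alpha)/2<1$; this is the asserted limit.

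The main obstacle is precisely the control of $\rho_k$ from below. The sequence $(w_n)$ is not monotone — the normalisation $w_0=1$ creates a dip at $n=1$ — and the infimum of $w_{n+k}/w_n$ is approached as $n\to\infty$ when $\alpha<1$ but sits near $n=0$ when $\alpha>1$, so no single monotonicity argument locates it. Routing the estimate through the polynomial two-sided bounds $c_1\,n^{1-\alpha}\le w_n\le c_2\,n^{1-\alpha}$ sidesteps this case analysis and is the crux of the proof; the remaining steps are bookkeeping with the coefficient formula of Proposition \ref{C2-2-1-3n2}.
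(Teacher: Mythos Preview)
Your argument follows the same route as the paper: pass to Taylor coefficients via Proposition~\ref{C2-2-1-3n2}, reduce everything to the ratios $w_{n+k}/w_n$, observe that these tend to $1$ as $n\to\infty$, and use Stirling to get $w_k\sim k^{1-\alpha}$. The paper, after noting $w_{n+k}/w_n\to1$, simply asserts the existence of $C>0$ with $C\|f\|_{D_\alpha}\le\|e_kf\|_{D_\alpha}$ and then computes $\|e_k\|_{D_\alpha}/(kC)\sim k^{(1-\alpha)/2}/(kC)\to 0$, tacitly treating $C$ as independent of $k$. You go further and track the $k$-dependence: you set $C=C_k=\sqrt{\rho_k}$ with $\rho_k=\inf_{n\ge0}w_{n+k}/w_n$ and extract the quantitative lower bound $\rho_k\gtrsim\min(1,k^{1-\alpha})$ from two-sided Stirling estimates. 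This is not a different method, but it is a genuine strengthening: for $\alpha>1$ one has $\rho_k\to0$, so no $k$-independent $C$ can work, and the paper's final asymptotic is only justified once one knows how $C_k$ decays. Your version closes that gap while staying on the paper's path.
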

	
	\begin{proof}
		\vspace{0.2cm}
		\noindent
		Let $f=\underset{n\geq 0}{\sum} a_n z^n\in D_\alpha$. We have 
		$$\Vert  e_k f\Vert  ^2_{D_\alpha} = \underset{n\geq 0}{\sum} w_{n+k} |a_n|^2.$$
		Hence,
		$$ \frac{\Vert  e_k f\Vert  ^2_{D_\alpha}}{\Vert  f\Vert  ^2_{D_\alpha}} = \underset{n\geq 0}{\sum} \frac{w_{n+k}}{w_n}.$$
		Since $\frac{w_{n+k}}{w_n} \rightarrow 1$ where $n\rightarrow +\infty$, the series is divergent and so there exists $C>0$ such that $C \Vert  f\Vert  _{D_\alpha} \leq \Vert  e_kf\Vert  _{D_\alpha}$. On the other hand, for all $k\geq 1$, by Stirling formula, we have  
		$$\Vert  e_k\Vert  ^2_{D_\alpha} = w_k = k\frac{\alpha!k!}{(\alpha+k)!} \underset{+\infty}{\sim} \frac{k^{k+3/2}}{k^{\alpha+k+1/2}} = k^{1-\alpha}.$$
		Then, 
		$$\frac{\Vert  e_k\Vert  _{D_\alpha}}{kC} \underset{+\infty}{\sim} \frac{k^{1/2(1-\alpha)}}{kC} = \frac{1}{k^{1/2(1+\alpha)}} \rightarrow 0.$$
		Therefore,  $\underset{k\rightarrow +\infty}{\lim} \frac{\Vert  e_k\Vert  _{D_\alpha}}{kC} = 0$.
		\vspace{0.2cm}
	\end{proof}

	It is well known that the other conditions of Theorem \ref{C2-2-1-3a} are satisfied by $D_\alpha$ when $\alpha \in [1, +\infty)$, so we can apply it to these spaces. \\
	\\
 We now study the reciprocal implication, that is if the infinitesimal generator ($\Gamma, D(\Gamma)$) of $(S_t)_{t\geq 0}$ is of the form \eqref{2}, then the strongly continuous semigroup has the form \eqref{1}. First of all, we consider the same conditions on the Banach space $X$ as in the paper \cite{Chalendar}, that is a evaluation condition of the form: \\
	\\
	\label{E}
	(E)\  \  Let $z_n\in \Omega$ such that $z_n\rightarrow z \in \overline{\Omega}\cup \{\infty\}$ and $\underset{n\rightarrow \infty}{\lim} f(z_n)$ exists in $\mathbb{C}$ for all $f\in X$, then $z\in \Omega$. \\
	\\	
Before to show the reciprocal implication, let us begin by several lemmas. \\

\begin{lem}
\label{l3-1}
		Let $\Omega$ be a non-empty open and simply connected subset of $\mathbb{C}$, $X$ a Banach space of holomorphic functions and $g, G\in $Hol($\Omega$). Let $\Gamma$ be the operator defined by $\Gamma f=gf+Gf^\prime$ on a domain $D(\Gamma)\subset X$. Let $\delta_z$ be the evaluation in $z$ and $\Gamma^\prime$ the adjoint operator of $\Gamma$. Then for all $z\in \Omega$, $\delta_z\in D(\Gamma^\prime)$ and $\Gamma^\prime\delta_z=g(z)\delta_z+G(z)\delta_z^\prime$, where $<\delta_z^\prime, f>=f^\prime(z)$.
	\end{lem}
	
\begin{proof}
	Let $f\in D(\Gamma)$. We have
	\begin{align*}	
	<\Gamma^\prime\delta_z, f>=<\delta_z, \Gamma f> & =<\delta_z, gf+Gf^\prime> \\
	& =g(z)f(z)+G(z)f^\prime(z) \\
	& =g(z)<\delta_z, f>+<G(z)\delta_z^\prime, f> \\
	& = <g(z)\delta_z+G(z)\delta_z^\prime, f>.
	\end{align*}
	Hence $\delta_z\in D(\Gamma^\prime)$ and $\Gamma^\prime\delta_z=g(z)\delta_z+G(z)\delta_z^\prime$.
\end{proof}

\begin{lem}
\label{l3-2}
		Let $\Omega$ be a non-empty open and simply connected subset of $\mathbb{C}$, $X$ a Banach space of holomorphic functions, $g, G\in $Hol($\Omega$) and $(S_t)_{t\geq 0}$ a strongly continuous semigroup on $X$ with generator $(\Gamma, D(\Gamma))$ where $\Gamma f=gf+Gf^\prime$ for all $f\in D(\Gamma)$. Let $f\in X$. Define $u(t,z):=(S_tf)(z)$. Then $\frac{d}{dt} u(t,z)= g(z)u(t,z)+G(z)\frac{d}{dz} u(t,z)$ for all $t\geq 0$ and $z\in \Omega$.
\end{lem}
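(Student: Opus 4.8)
The plan is to turn the abstract generator identity into a scalar integral equation by testing against the evaluation functionals and then differentiating in $t$. The structural input I would rely on is the integrated form of the semigroup relation: for every $f\in X$ and every $t\geq 0$, the Bochner integral $\int_0^t S_s f\, ds$ lies in $D(\Gamma)$ and satisfies $\Gamma\int_0^t S_s f\, ds = S_t f - f$. This is a standard fact for $C_0$-semigroups, and it is exactly what lets me avoid the assumption $f\in D(\Gamma)$, which is essential since the statement must hold for an arbitrary $f\in X$.

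First I would pair this identity with the evaluation $\delta_z$. Since $\delta_z\in D(\Gamma')$ by Lemma \ref{l3-1}, I may transfer $\Gamma$ onto the functional:
\begin{equation*}
\langle \delta_z, S_t f - f\rangle = \left\langle \delta_z, \Gamma \int_0^t S_s f\, ds \right\rangle = \left\langle \Gamma'\delta_z, \int_0^t S_s f\, ds \right\rangle .
\end{equation*}
I then substitute $\Gamma'\delta_z = g(z)\delta_z + G(z)\delta_z'$, again from Lemma \ref{l3-1}. Because $\delta_z$ and $\delta_z'$ are bounded linear functionals on $X$ — the latter being the composition of the continuous embedding $X\hookrightarrow \mathrm{Hol}(\Omega)$, the continuous differentiation map on $\mathrm{Hol}(\Omega)$, and evaluation at $z$ — each commutes with the Bochner integral, so I obtain
\begin{equation*}
u(t,z) - f(z) = \int_0^t \left[ g(z)\, u(s,z) + G(z)\, \partial_z u(s,z) \right] ds ,
\end{equation*}
where $\partial_z u(s,z) = \langle \delta_z', S_s f\rangle = (S_s f)'(z)$ and $u(0,z) = f(z)$.

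Finally I would differentiate this integral equation in $t$. The integrand $s\mapsto g(z)u(s,z) + G(z)\partial_z u(s,z)$ is continuous: strong continuity of $(S_t)_{t\geq 0}$ gives continuity of $s\mapsto S_s f$ in $X$, and composing with the bounded functionals $\delta_z$ and $\delta_z'$ preserves continuity. The fundamental theorem of calculus then yields $\partial_t u(t,z) = g(z)u(t,z) + G(z)\partial_z u(t,z)$ for all $t\geq 0$ and $z\in\Omega$, which is the claim.

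I expect the only delicate points to concern the functional $\delta_z'$: verifying that it genuinely belongs to $X'$ (so that it both appears legitimately in Lemma \ref{l3-1} and commutes with the Bochner integral) and confirming its continuous dependence on $s$ through $S_s f$. Both reduce to continuity of differentiation on $\mathrm{Hol}(\Omega)$ combined with the continuous embedding hypothesis, so no serious obstacle arises; the real content of the argument is carried entirely by Lemma \ref{l3-1} and the integrated semigroup identity.
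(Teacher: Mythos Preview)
Your proof is correct and rests on the same key input as the paper, namely Lemma~\ref{l3-1}. The paper's argument is shorter: it writes directly
\[
\frac{d}{dt}\langle\delta_z,S_tf\rangle=\langle\delta_z,\tfrac{d}{dt}S_tf\rangle=\langle\delta_z,\Gamma S_tf\rangle=\langle\Gamma'\delta_z,S_tf\rangle
\]
and then substitutes the formula for $\Gamma'\delta_z$. Taken literally this chain is problematic for general $f\in X$, since $S_tf$ need not lie in $D(\Gamma)$ and $\tfrac{d}{dt}S_tf$ need not exist in $X$; the paper is tacitly invoking the standard fact that for $x^*\in D(\Gamma')$ the scalar map $t\mapsto\langle x^*,S_tf\rangle$ is $C^1$ with derivative $\langle\Gamma'x^*,S_tf\rangle$ for \emph{every} $f\in X$. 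Your detour through the integrated identity $\Gamma\int_0^t S_sf\,ds=S_tf-f$ is exactly the clean justification of that fact, so your argument is best read as a rigorous unpacking of the step the paper takes for granted rather than a genuinely different strategy.
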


\begin{proof}
	From the previous lemma, we have
	\begin{align*}
	\frac{d}{dt} u(t,z)=\frac{d}{dt}<\delta_z, S_tf>=<\delta_z, \frac{d}{dt}S_tf> & = <\delta_z, \Gamma S_tf> \\
	& = <\Gamma^\prime\delta_z, S_tf> \\
	& =<g(z)\delta_z+G(z)\delta_z^\prime, S_tf> \\
	& = g(z)S_tf(z) + G(z)(S_tf)^\prime(z) \\ 
	& = g(z)u(t,z)+G(z)\frac{d}{dz} u(t,z).
	\end{align*}
\end{proof}

\begin{lem}
\label{l3-3}
		Let $\Omega$ be a non-empty open and simply connected subset of $\mathbb{C}$, $X$ a Banach space of holomorphic functions. Let $g \in Hol(\Omega)$ and $G\in Hol(\Omega)$ which generates the semiflow $(\varphi_t)_{0\leq t <\tau(z)}$. Let $(S_t)_{t\geq 0}$ be a strongly continuous semigroup on $X$ with generator $(\Gamma, D(\Gamma))$ where $\Gamma f=gf+Gf^\prime$ for all $f\in D(\Gamma)$. Let $f\in X$. Then, $(S_tf)(z)=m_t(z)f(\varphi_t(z))$ for all $z\in \Omega$ and $0\leq t<\tau(z)$, where 
	\begin{equation*}
	m_t(z)=\exp\left( \int_0^t g(\varphi_s(z))ds \right) , \ \ z\in \Omega.
	\end{equation*}
\end{lem}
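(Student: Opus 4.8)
The plan is to solve the first-order linear transport equation furnished by Lemma~\ref{l3-2} along the characteristics of the flow, which are exactly the orbits $t\mapsto \varphi_t(z)$. Recall that Lemma~\ref{l3-2} gives, for $u(t,z):=(S_tf)(z)$, the identity $\partial_t u(t,z)=g(z)u(t,z)+G(z)\partial_z u(t,z)$. Since this is the equation satisfied by a weighted pullback along the flow, I expect the multiplier $m_t$ to arise precisely as the integrating factor $\exp(\int_0^t g(\varphi_s(z))\,ds)$.

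First I would reduce to the case $f\in D(\Gamma)$. For such $f$ the orbit $t\mapsto S_tf$ is a $C^1$ curve in $X$ with derivative $\Gamma S_tf$, so that $t\mapsto u(t,z)$ is differentiable while $z\mapsto u(t,z)$ is holomorphic; moreover, since $\left\| \cdot\right\|_X$ dominates locally uniform convergence in $\mathrm{Hol}(\Omega)$, the continuity of $t\mapsto S_tf$ in $X$ forces $u$, $\partial_z u$, and hence $\partial_t u=gu+G\partial_z u$, to be jointly continuous on $[0,\infty)\times\Omega$. Thus $u$ is jointly $C^1$, which is what legitimizes the chain rule below. Now fix $z\in\Omega$ and $T\in[0,\tau(z))$, so that $\varphi_t(z)\in\Omega$ for every $t\in[0,T]$, and set
\[
h(t):=u(T-t,\varphi_t(z)),\qquad 0\le t\le T.
\]
Differentiating and using $\frac{d}{dt}\varphi_t(z)=G(\varphi_t(z))$ together with the transport equation evaluated at the point $(T-t,\varphi_t(z))$, the two terms involving $\partial_z u$ cancel and I obtain
\[
h'(t)=-g(\varphi_t(z))\,h(t).
\]
Solving this scalar linear ODE gives $h(t)=h(0)\exp\!\bigl(-\textstyle\int_0^t g(\varphi_s(z))\,ds\bigr)$. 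Evaluating at $t=0$ yields $h(0)=u(T,z)=(S_Tf)(z)$, while evaluating at $t=T$ and using $S_0=\mathrm{Id}$ yields $h(T)=u(0,\varphi_T(z))=f(\varphi_T(z))$. Rearranging the resulting identity gives $(S_Tf)(z)=\exp\!\bigl(\int_0^T g(\varphi_s(z))\,ds\bigr)f(\varphi_T(z))=m_T(z)f(\varphi_T(z))$, as desired.

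It then remains to remove the restriction $f\in D(\Gamma)$. Since $(\Gamma,D(\Gamma))$ generates a strongly continuous semigroup, $D(\Gamma)$ is dense in $X$; given $f\in X$ I would choose $f_n\in D(\Gamma)$ with $f_n\to f$ in $X$ and pass to the limit in the identity $(S_tf_n)(z)=m_t(z)f_n(\varphi_t(z))$, using that both $\delta_z$ and $\delta_{\varphi_t(z)}$ are continuous linear functionals on $X$ and that $S_t$ is bounded.

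The step I expect to be the main obstacle is the justification of the chain rule, i.e. establishing that $u$ is genuinely jointly differentiable rather than merely separately differentiable in $t$ and in $z$; this is exactly where working with $f\in D(\Gamma)$ and exploiting that $X\hookrightarrow\mathrm{Hol}(\Omega)$ transfers $X$-convergence to locally uniform convergence of the functions together with all their $z$-derivatives becomes essential. A secondary point to watch is to keep $T<\tau(z)$ throughout, so that every point $\varphi_t(z)$ at which the transport equation is invoked indeed lies in $\Omega$.
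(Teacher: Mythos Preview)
Your proof is correct and follows essentially the same method of characteristics as the paper: the paper sets $w(s)=m_s(z)\,u(t-s,\varphi_s(z))$ and shows $w'\equiv 0$, which is precisely your computation after multiplying your $h$ by the integrating factor $m_s(z)$. One minor difference is that the paper invokes Lemma~\ref{l3-2} directly for arbitrary $f\in X$ (via $\delta_z\in D(\Gamma')$), so your preliminary reduction to $f\in D(\Gamma)$ and subsequent density argument, while harmless and arguably more careful about the joint regularity needed for the chain rule, is not made explicit there.
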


\begin{proof}
	Define $u(t,z):=(S_tf)(z)$ for all $z\in \Omega$ and $0\leq t<\tau(z)$ and $w:[0,t]\rightarrow \mathbb{C}$ defined by $w(s)=m_s(z)u(t-s,\varphi_s(z))$ for $s\in [0,t]$. From the previous lemma, we have
	\begin{equation*}	
	\begin{split}
	& w^\prime(s)=\frac{d}{ds} w(s)  =\frac{d}{ds} m_s(z) u(t-s,\varphi_s(z)) \\
	  &= \frac{d}{ds} [m_s(z)].u(t-s,\varphi_s(z)) + m_s(z).\frac{d}{ds} [u(t-s,\varphi_s(z))] \\
	 &= m_s(z)g(\varphi_s(z))u(t-s,\varphi_s(z))  \\
	 &+  m_s(z). \left[ -\frac{d}{dt}u(t-s,\varphi_s(z)) + \frac{d}{ds} \varphi_s(z) . \frac{d}{dz} u(t-s,\varphi_s(z)) \right] \\
	 & = m_s(z). \left[ g(\varphi_s(z))u(t-s,\varphi_s(z)) + G(\varphi_s(z))\frac{d}{dz} u(t-s,\varphi_s(z)) \right] \\ 
	 &-m_s(z) \frac{d}{dt}u(t-s,\varphi_s(z)) \\
	 & = m_s(z) \frac{d}{dt}u(t-s,\varphi_s(z)) - m_s(z) \frac{d}{dt}u(t-s,\varphi_s(z)) = 0.
\end{split}	
	\end{equation*}
	Hence $w(t)=w(0)$ with $w(0)=u(t,z)=(S_tf)(z)$ and
	\begin{equation*}
	w(t)=m_t(z)u(0,\varphi_t(z))=m_t(z)(S_0f)(\varphi(t,z))=m_t(z)f(\varphi_t(z)). 
	\end{equation*}
	Therefore, $(S_tf)(z)=m_t(z)f(\varphi_t(z))$ for all $z\in \Omega$ and $0\leq t<\tau(z)$.
\end{proof}	

\begin{lem}
\label{l3-4}
	Let $\Omega$ be a non-empty open and simply connected subset of $\mathbb{C}$ and let $X$ be a complex Banach space which embeds continuously in Hol($\Omega$). Moreover, we suppose that $X$ satisfies the evaluation condition (E). Let $G\in Hol(\Omega)$ generate the semiflow $(\varphi_t)_{0\leq t <\tau(z)}$ and $g\in Hol(\Omega)$. Let $(S_t)_{t\geq 0}$ be a strongly continuous semigroup on $X$ of generator $(\Gamma, \mathcal{D})$, with $\Gamma f=gf+Gf^\prime$ for all $f\in \mathcal{D}$. Then, $\tau(z)=\infty$ for all $z\in \Omega$ and therefore the semiflow $\Phi$ is global on $\Omega$. 
\end{lem}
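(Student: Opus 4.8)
The plan is to argue by contradiction: assume $\tau(z_0)<\infty$ for some $z_0\in\Omega$ and set $T:=\tau(z_0)$. The first ingredient is the classical escape property of maximal solutions of the autonomous equation $u'=G(u)$: since the maximal time $T$ is finite, the orbit $t\mapsto\varphi_t(z_0)$ must eventually leave every compact subset of $\Omega$. Viewing $\Omega\subset\widehat{\mathbb{C}}=\mathbb{C}\cup\{\infty\}$ and using compactness of $\widehat{\mathbb{C}}$, I would choose $t_n\uparrow T$ such that $z_n:=\varphi_{t_n}(z_0)$ converges to some $z^\ast\in\widehat{\mathbb{C}}$; the escape property then forces $z^\ast\in(\partial\Omega)\cup\{\infty\}$, so in particular $z^\ast\notin\Omega$. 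The whole strategy is to contradict condition (E) by showing that, despite this, $\lim_n f(z_n)$ nevertheless exists in $\mathbb{C}$ for \emph{every} $f\in X$.

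Next I would transfer information from the abstract semigroup to these point evaluations. Strong continuity makes $t\mapsto S_tf$ continuous into $X$, and continuity of the embedding $X\hookrightarrow\mathrm{Hol}(\Omega)$ makes $\delta_{z_0}$ a bounded functional; hence $(S_{t_n}f)(z_0)\to(S_Tf)(z_0)$ for all $f\in X$. Combining this with the representation of Lemma~\ref{l3-3}, valid for $t<T$, namely $(S_tf)(z_0)=m_t(z_0)f(\varphi_t(z_0))$, I obtain that the functionals $F_n:=m_{t_n}(z_0)\,\delta_{z_n}=S_{t_n}'\delta_{z_0}$ are uniformly bounded, $\|F_n\|\le\big(\sup_{0\le t\le T}\|S_t\|\big)\|\delta_{z_0}\|<\infty$, and converge in the weak-$\ast$ topology to $\mu:=S_T'\delta_{z_0}$; equivalently $m_{t_n}(z_0)\,f(z_n)\to(S_Tf)(z_0)$ for every $f\in X$.

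The decisive step is to pass from the products $m_{t_n}(z_0)f(z_n)$ to the values $f(z_n)$ themselves. Writing $M(t):=|m_t(z_0)|=\exp\!\big(\int_0^t\Re g(\varphi_s(z_0))\,ds\big)$, I would pass to a further subsequence along which $m_{t_n}(z_0)$ converges in $\widehat{\mathbb{C}}$ to some $c$ with $|c|=\limsup_{t\to T^-}M(t)=:L$. If $L\in(0,\infty]$ then $\lim_n f(z_n)$ exists for every $f$: it equals $(S_Tf)(z_0)/c$ when $L<\infty$, since $f(z_n)=F_n(f)/m_{t_n}(z_0)$, and it equals $0$ when $L=\infty$, since $F_n(f)$ stays bounded. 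In either case (E) forces $z^\ast\in\Omega$, contradicting $z^\ast\notin\Omega$. The only remaining possibility is $L=0$, that is $M(t)\to0$ and $\int_0^T\Re g(\varphi_s(z_0))\,ds=-\infty$.

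I expect this degenerate case $m_{t_n}(z_0)\to0$ to be the main obstacle, since then $f(z_n)=F_n(f)/m_{t_n}(z_0)$ need not converge — indeed $|f(z_n)|\to\infty$ for any $f$ with $(S_Tf)(z_0)\ne0$ — so (E) cannot be applied naively. To close this case I would argue at the level of the functionals: $F_n=m_{t_n}(z_0)\delta_{z_n}$ is a rescaling of the point evaluations at the boundary points $z_n\to z^\ast$, it is uniformly bounded, and it converges weak-$\ast$ to $\mu=S_T'\delta_{z_0}$. Here the full strength of the evaluation condition (E) for the spaces of \cite{Chalendar} is needed, as it precisely prevents a rescaled boundary sequence of evaluations from having a nonzero weak-$\ast$ limit; it therefore suffices to check $\mu\ne0$, i.e. that $(S_Tf)(z_0)\ne0$ for some $f$, which I expect to follow from the injectivity of each $S_t$ (a consequence of $m_t$ being zero-free together with the injectivity of $\varphi_t$). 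This verification is the crux of the whole argument; once the contradiction is obtained we conclude $\tau(z_0)=\infty$ for every $z_0\in\Omega$, and the semiflow $\Phi$ is global.
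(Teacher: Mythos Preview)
Your overall strategy coincides with the paper's: assume $\tau(z_0)=T<\infty$, pick $t_n\uparrow T$ with $z_n:=\varphi_{t_n}(z_0)\to z^\ast\in\partial\Omega\cup\{\infty\}$, combine Lemma~\ref{l3-3} with strong continuity and the boundedness of $\delta_{z_0}$ to get $m_{t_n}(z_0)\,f(z_n)\to(S_Tf)(z_0)$ for every $f\in X$, and then try to deduce that $\lim_n f(z_n)$ exists so that condition (E) forces $z^\ast\in\Omega$, a contradiction.

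The difference lies entirely in how the weight is handled. The paper's proof simply divides through and writes
\[
\lim_{n\to\infty} f(\varphi(t_n,z))=\frac{(S_{\tau(z)}f)(z)}{m_{\tau(z)}(z)},
\]
without pausing to check that $m_{\tau(z)}(z)=\exp\!\big(\int_0^{\tau(z)} g(\varphi_s(z))\,ds\big)$ is a well-defined nonzero number; since $\varphi_s(z)$ leaves every compact as $s\to\tau(z)^-$, the integrand $g(\varphi_s(z))$ need not stay bounded, and this is exactly the difficulty your trichotomy on $L=\limsup_{t\to T^-}|m_t(z_0)|$ is designed to address. Your treatment of the cases $L\in(0,\infty)$ and $L=\infty$ is correct and is in fact more careful than what the paper writes.

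Your handling of the case $L=0$, however, is a genuine gap. You claim that ``the full strength of the evaluation condition (E)'' prevents a rescaled boundary sequence $m_{t_n}(z_0)\,\delta_{z_n}$ from having a nonzero weak-$\ast$ limit, but condition (E) as stated in the paper concerns only the unscaled values $f(z_n)$: it says that if $\lim_n f(z_n)$ exists in $\mathbb{C}$ for all $f\in X$ then the limit point lies in $\Omega$. It asserts nothing about sequences $c_n\delta_{z_n}$ with scalars $c_n\to 0$; there is no obstruction in (E) to such a sequence converging weak-$\ast$ to a nonzero functional when $\|\delta_{z_n}\|\to\infty$ at a compatible rate. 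So the appeal to (E) here is not justified, and the argument does not close. In summary: your scaffolding matches the paper's proof, you correctly isolate a subtlety that the paper glosses over, but your proposed resolution of that subtlety remains incomplete.
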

	
\begin{proof}
	Suppose there exists $z\in \Omega$ such that $\tau(z)<\infty$. Then, there exists an increasing sequence $(t_n)_n$ which converges to $\tau(z)$ and there exists $\alpha\in \partial \Omega\cup \{\infty\}$ such that $\underset{n\rightarrow \infty}{\lim} \varphi(t_n,z)=\alpha$. Let $f\in X$. Then, by continuity and Lemma \ref{l3-3}, we have
	\begin{equation*}
	\underset{n\rightarrow \infty}{\lim} m_{t_n}(z)f(\varphi(t_n,z)) = \underset{n\rightarrow \infty}{\lim} (S_{t_n}f)(z)=(S_{\tau(z)}f)(z).
	\end{equation*}
	Hence,
	\begin{equation*}
	\underset{n\rightarrow \infty}{\lim} f(\varphi(t_n,z)) = \frac{(S_{\tau(z)}f)(z)}{m_{\tau(z)}(z)}
	\end{equation*}
	exists in $\mathbb{C}$ for all $f\in X$. However, from the evaluation condition (E), we have $\alpha\in \Omega$ which is a contradiction with $\alpha\in \partial \Omega\cup \{\infty\}$.
\end{proof}

\begin{thm}
\label{t3-5}
	Let $\Omega$ be a non-empty open and simply connected subset of $\mathbb{C}$ and let $X$ be a complex Banach space which embeds continuously in Hol($\Omega$). Moreover, we suppose that $X$ satisfies the evaluation condition (E). Let $G\in Hol(\Omega)$ generate the semiflow $(\varphi_t)_{0\leq t <\tau(z)}$ and $g\in Hol(\Omega)$. Let $(S_t)_{t\geq 0}$ be a strongly continuous semigroup on $X$ of generator $(\Gamma, \mathcal{D})$ with $\Gamma f=gf+Gf^\prime$ for all $f\in \mathcal{D}$. Then, the semiflow is global and $(S_tf)(z)=m_t(z)f(\varphi_t(z))$ for all $f\in X$, $t\geq 0$ and $z\in\Omega$ with 
\begin{equation*}
m_t(z)=\exp\left( \int_0^t g(\varphi_s(z))ds \right).
\end{equation*} 
Moreover, $\mathcal{D}=\{f\in X \ | \ gf + Gf^\prime \in X\}=:D(\Gamma)$.
\end{thm}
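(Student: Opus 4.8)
The plan is to assemble the preceding lemmas for the first two assertions and then to treat the domain identity, which is the only part requiring a genuinely new argument. For globality I would simply invoke Lemma~\ref{l3-4}, which already gives $\tau(z)=\infty$ for every $z\in\Omega$. Once the semiflow is global, Lemma~\ref{l3-3} yields the representation $(S_tf)(z)=m_t(z)f(\varphi_t(z))$ with the stated weight $m_t(z)=\exp\left(\int_0^t g(\varphi_s(z))\,ds\right)$, now valid for all $t\ge 0$ and $z\in\Omega$ rather than merely on $[0,\tau(z))$. This disposes of the first two claims essentially for free.

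The substantive point is the equality $\mathcal{D}=D(\Gamma)$ with $D(\Gamma)=\{f\in X: gf+Gf'\in X\}$. One inclusion is immediate: if $f\in\mathcal{D}$ then $\Gamma f=gf+Gf'$ lies in $X$, so $f\in D(\Gamma)$. For the reverse inclusion I would use a resolvent–uniqueness device. Since $(\Gamma,\mathcal{D})$ generates a strongly continuous semigroup, its resolvent set contains a right half-plane, so I may fix $\lambda$ in the resolvent set of $\Gamma$. Given $f\in D(\Gamma)$, surjectivity of $\lambda-\Gamma:\mathcal{D}\to X$ furnishes $h\in\mathcal{D}$ with $(\lambda-\Gamma)h=\lambda f-gf-Gf'$. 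Setting $v:=f-h\in X$ and subtracting, the zeroth- and first-order terms recombine to give $\lambda v=gv+Gv'$ on $\Omega$, that is $Gv'=(\lambda-g)v$.

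The key step is to show this forces $v=0$, and here the global flow does the work. Fixing $z_0\in\Omega$ and writing $\psi(s):=v(\varphi_s(z_0))$, the equation $\frac{d}{ds}\varphi_s(z_0)=G(\varphi_s(z_0))$ together with $Gv'=(\lambda-g)v$ gives $\psi'(s)=(\lambda-g(\varphi_s(z_0)))\psi(s)$, whence $\psi(s)=v(z_0)e^{\lambda s}m_s(z_0)^{-1}$. Multiplying by $m_s(z_0)$ and inserting the representation from the first part, I obtain $(S_sv)(z_0)=e^{\lambda s}v(z_0)$ for every $z_0\in\Omega$ and $s\ge 0$. Because $X$ embeds in $Hol(\Omega)$, this pointwise identity upgrades to the equality $S_sv=e^{\lambda s}v$ in $X$, so dividing by $s$ and letting $s\to 0^+$ shows $v\in\mathcal{D}$ with $\Gamma v=\lambda v$. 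As $\lambda$ lies in the resolvent set, $\lambda-\Gamma$ is injective, forcing $v=0$, i.e. $f=h\in\mathcal{D}$. I expect the main obstacle to be precisely this uniqueness point: ruling out a nonzero solution of the eigenvalue equation $Gv'=(\lambda-g)v$ inside $X$. The crux is to avoid trying to estimate $v$ directly (which is delicate because $G$ may vanish) and instead to recognize $v$ as an eigenvector of the whole semigroup via the flow, so that the contradiction comes for free from the choice of $\lambda$ in the resolvent set.
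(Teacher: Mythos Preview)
Your argument is correct. The first two assertions are handled exactly as in the paper, by invoking Lemmas~\ref{l3-4} and~\ref{l3-3}, and the inclusion $\mathcal{D}\subset D(\Gamma)$ is immediate in both approaches. The divergence is in the proof of $D(\Gamma)\subset\mathcal{D}$.

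The paper proceeds constructively: for $f\in X$ with $h:=gf+Gf'\in X$, it differentiates $s\mapsto m_s(z)f(\varphi_s(z))$ directly along the flow to obtain $\frac{d}{ds}(S_sf)(z)=(S_sh)(z)$, integrates to get $S_tf-f=\int_0^t S_sh\,ds$ (the pointwise identity upgrades to an identity in $X$ via continuity of point evaluations), and then uses the standard fact that $\frac{1}{t}\int_0^t S_sh\,ds\to h$ in $X$ by strong continuity. This yields $f\in\mathcal{D}$ with $\Gamma f=h$. Along the way the paper also verifies that the unweighted family $T_tf=f\circ\varphi_t$ is itself a $C_0$-semigroup, although this step is not strictly needed for the conclusion.

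Your route is the abstract resolvent/eigenvector device: reduce to showing that a solution $v\in X$ of $Gv'=(\lambda-g)v$ must vanish, and recognize via integration along trajectories that such a $v$ satisfies $S_sv=e^{\lambda s}v$ in $X$, hence $v\in\mathcal{D}$ with $\Gamma v=\lambda v$, contradicting injectivity of $\lambda-\Gamma$. This is clean and avoids any auxiliary semigroup; it is the standard ``no proper extension of a generator'' argument, adapted to the concrete situation. The paper's approach, by contrast, gives an explicit integral formula for $S_tf-f$ that may be of independent use, and does not require singling out a resolvent point. Both are short and either would be acceptable here.
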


\begin{proof}
 Let $f\in X$. From, Lemma \ref{l3-3} and Lemma \ref{l3-4}, we have $S_tf=m_tf\circ \varphi_t$ for all $t\geq 0$. \\
	\\
	Let $\mathcal{D}$ denote the domain of the generator $\Gamma$. We need to show $\mathcal{D} = D(\Gamma)$. By definition, we have
	\begin{equation*}
\mathcal{D}=\{f\in X \ | \  \underset{t\rightarrow 0^+}{\lim} \frac{S_tf-f}{t} \ \text{ exists in } X\}.
	\end{equation*}
	Let $f\in \mathcal{D}$. Since $X$ embeds continuously in Hol($\Omega$), the convergence in $X$ implies the pointwise convergence. Therefore, for all $z\in \Omega$, we have 
	\begin{align*}
	\underset{t\rightarrow 0^+}{\lim} \frac{S_tf(z)-f(z)}{t} & = \left. \frac{d}{dt} S_tf(z)\right|_{t=0} \\ 
	&= \left. \frac{d}{dt} m_t(z)f(\varphi_t(z))\right|_{t=0} \\
	& = \left[ m_t(z)g(\varphi_t(z))f(\varphi_t(z)) + m_t(z)(Gf^\prime)(\varphi_t(z)) \right]_{t=0} \\
	& =  g(z)f(z)+G(z)f^\prime(z).
	\end{align*}
	Hence, $gf + Gf^\prime \in X$ and $\mathcal{D} \subset D(\Gamma)$. Now we show that the family of operators $(T_t)_{t \geq 0}$ in $X$ defined by $T_tf=f\circ \varphi_t$ is a strongly continuous semigroup whose generator $A$ is of the form $Af=Gf^\prime$. By the semiflow properties, we have $T_0=I_X$ and $T_{t+s}=T_t\circ T_s$ for every $t, s\geq 0$. Moreover, by definition of $T_t$, for all $f\in X$, we have
	\begin{equation*}
	\underset{t\rightarrow 0^+}{\lim} T_tf = \underset{t\rightarrow 0^+}{\lim} \frac{S_tf}{m_t} = \underset{t\rightarrow 0^+}{\lim} S_tf = f.
	\end{equation*}
	Then, the family of operators $(T_t)_{t \geq 0}$ is a strongly continuous semigroup on $X$. Its generator is given by
	\begin{equation*}
	Af = \underset{t\rightarrow 0^+}{\lim} \frac{T_tf-f}{t} = \left. \frac{d}{dt} T_tf\right|_{t=0} =  \left. \frac{d}{dt} \frac{S_tf}{m_t}\right|_{t=0}.
	\end{equation*}
	However,
	\begin{align*}
	\left. \frac{d}{dt} \frac{S_tf(z)}{m_t(z)}\right|_{t=0} & = \left[ m_t(z)\frac{ g(z)S_tf(z)+G(z)(S_tf)^\prime(z)-g(\varphi_t(z))S_tf(z)}{m_t(z)^2} \right]_{t=0} \\
	& = G(z) \left[ (S_tf)^\prime(z) \right]_{t=0} = G(z)f^\prime(z).
	\end{align*}
	Now let us show that $D(\Gamma) \subset \mathcal{D}$. Let $f\in X$ such that $h:=gf+Gf^\prime \in X$. Then, for all $z\in \Omega$, from Lemma \ref{l3-2} applied with $T_sf$ and $T_sId$, we have
	\begin{align*}
	\frac{d}{ds} \left[ m_s(z)f(\varphi_s(z)) \right] & = m_s(z)g(\varphi_s(z))f(\varphi_s(z)) + m_s(z) \frac{d}{ds} T_sf \\
	& = m_s(z)g(\varphi_s(z))f(\varphi_s(z)) + m_s(z)G(z) \frac{d}{dz} f(\varphi_s(z)) \\
	& =  m_s(z)g(\varphi_s(z))f(\varphi_s(z)) + m_s(z) G(z) \frac{d}{dz}\left[ \varphi_s(z) \right] f^\prime(\varphi_s(z)) \\
	& =  m_s(z)g(\varphi_s(z))f(\varphi_s(z)) +  m_s(z) G(\varphi_s(z))f^\prime(\varphi_s(z)) \\
	&= m_s(z)h(\varphi_s(z))
	\end{align*}
	for all $s\geq 0$. \\
	\\
	Let $t>0$. Then, we have 
	\begin{align*}
	\int_{0}^{t} (S_sh)(z)ds = \int_{0}^{t} m_s(z)h(\varphi_s(z)) ds  & = m_t(z)f(\varphi_t(z))-m_0(z)f(\varphi_0(z)) \\
	 & = S_tf(z)-f(z).
	\end{align*}
	Furthermore, by the semigroup properties, we have
	\begin{equation*}
	\underset{t\rightarrow 0^+}{\lim}\frac{1}{t} \int_{0}^{t} (S_sh)(z)ds = S_0h(z)=h(z). 
	\end{equation*}
	Thus, 
	\begin{equation*}
	\underset{t\rightarrow 0^+}{\lim}\frac{S_tf(z)-f(z)}{t} = h(z).
	\end{equation*}
	Hence, $\Gamma f=h$ and $f\in D$. Then, $\mathcal{D} = D(\Gamma)$.
	\end{proof}

\noindent Recently, in \cite{Gutierrez21}, they showed that another condition on Banach spaces $X$ implies the same result. This condition is the following:
$$Hol(\overline{\mathbb{D}}) \hookrightarrow X \hookrightarrow Hol(\mathbb{D})$$
where both embeddings are continuous and $Hol(\overline{\mathbb{D}})$ is the set of $f\in Hol(\mathbb{D})$ such that there exists an open subset $U_f \subset \overline{\mathbb{D}}$ such that $f$ extends analytically on $U_f$. The condition $Hol(\overline{\mathbb{D}}) \hookrightarrow X$ means that for all $\varepsilon > 0$, the disk algebra $A(D(0,1+\varepsilon))$ embeds continuously in $X$, that is $\forall \varepsilon > 0$, $\exists C_\varepsilon > 0$ such that
$$\Vert   f \Vert  _X \leq C_\varepsilon \Vert   f \Vert  _{\infty, D(0,1+\varepsilon)}$$
for all $f\in A(D(0,1+\varepsilon))$. In particular, $e_n: z\mapsto z^n$ are in $A(D(0,1+\varepsilon))$, so $\Vert  e_n\Vert  _X \leq C_\varepsilon (1+\varepsilon)^n$ and thus $\underset{n}{\overline{\lim}} \Vert  e_n\Vert  _X^{\frac{1}{n}}  \leq 1$. In fact, the condition $\underset{n}{\overline{\lim}} \Vert  e_n\Vert  _X^{\frac{1}{n}}  \leq 1$ is equivalent to $Hol(\overline{\mathbb{D}}) \hookrightarrow X$. Indeed, let $\varepsilon > 0$ and $f\in A(D(0,1+\varepsilon))$. Thus, the Taylor coefficients of $f$ are given by Cauchy's formula:
$$a_n = \frac{1}{2\pi i} \int_{|z|=1+\frac{\varepsilon}{2}} \frac{f(z)}{z^{n+1}} dz.$$
Hence, we have
$$|a_n| \leq \frac{2\pi}{2\pi } \left(1+\frac{\varepsilon}{2}\right) \frac{\Vert   f \Vert  _{\infty, D(0,1+\varepsilon)}}{(1+\frac{\varepsilon}{2})^{n+1}} = \frac{\Vert   f \Vert  _{\infty, D(0,1+\varepsilon)}}{(1+\frac{\varepsilon}{2})^{n}}.$$
On the other hand, by the hypothesis, there exists an integer $N$ such that $\Vert e_n \Vert_X^{\frac{1}{n}} \leq \left(1+\frac{\varepsilon}{4}\right)$ for $n\geq N$ and $M>0$ such that  $\Vert e_n \Vert_X^{\frac{1}{n}} \leq M$ for $n< N$. So, by the triangle inequality, we have
$$ \left\| \underset{n=0}{\overset{\infty}{\sum}} a_ne_n \right\|_X \leq \underset{n=0}{\overset{N-1}{\sum}} \frac{M \Vert   f \Vert_{\infty, D(0,1+\varepsilon)}}{(1+\frac{\varepsilon}{2})^{n}} + \underset{n=N}{\overset{\infty}{\sum}} \frac{\Vert   f \Vert_{\infty, D(0,1+\varepsilon)}(1+\frac{\varepsilon}{4})^{n}}{(1+\frac{\varepsilon}{2})^{n}},$$
which shows the existence of the constant $C_\varepsilon$. Thus, we can rewrite \cite[Theorem~3.1]{Gutierrez21} as follows.

\begin{thm}
	\label{t3-6}
Let $X$ be a Banach space of analytic functions on $\mathbb{D}$ satisfying: $e_n\in X$, for all $z\in \mathbb{D}$, evaluation maps $E_z: X\rightarrow \mathbb{C}$ are continuous, and $\underset{n}{\overline{\lim}} \Vert  e_n\Vert  _X^{\frac{1}{n}}  \leq 1$. Let $G\in Hol(\mathbb{D})$ and $g\in Hol(\mathbb{D})$. Define $\Gamma$ by $f\mapsto gf+Gf^\prime$ on $D(\Gamma)=\{f\in X \ | \ gf+Gf^\prime \in X \}$. If $\Gamma$ generates a strongly continuous semigroup on $X$, then it generates a strongly continuous weighted composition semigroup.
\end{thm}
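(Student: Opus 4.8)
The plan is to run the argument of Theorem~\ref{t3-5} almost verbatim, isolating the single place where the evaluation condition (E) was used --- namely the globality of the semiflow furnished by Lemma~\ref{l3-4} --- and to replace it by an argument that uses only the growth hypothesis $\underset{n}{\overline{\lim}}\,\|e_n\|_X^{1/n}\le 1$. Since we have just shown that this hypothesis is equivalent to the embedding $Hol(\overline{\mathbb D})\hookrightarrow X$ assumed in \cite{Gutierrez21}, the statement is literally the translation of \cite[Theorem~3.1]{Gutierrez21}, so one clean route is simply to invoke that theorem through the established equivalence of hypotheses. I would nonetheless present the argument intrinsically, parallel to Theorem~\ref{t3-5}.

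First I would record that Lemmas~\ref{l3-1}, \ref{l3-2} and \ref{l3-3} use nothing beyond the continuity of the evaluations $\delta_z$ and the fact that $e_0,e_1\in X$; none of them invokes (E). Hence, setting $m_t(z)=\exp\bigl(\int_0^t g(\varphi_s(z))\,ds\bigr)$, they already give the local representation $(S_tf)(z)=m_t(z)\,f(\varphi_t(z))$ for every $f\in X$, every $z\in\mathbb D$ and every $t\in[0,\tau(z))$; equivalently, in the dual picture, $S_t^\prime\delta_z=m_t(z)\,\delta_{\varphi_t(z)}$ for $t<\tau(z)$, which is the object I would track.

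The heart of the matter --- and the step I expect to be the main obstacle --- is to prove that $\tau(z)=\infty$ for all $z\in\mathbb D$ without (E). The natural candidate for a global symbol is $\tilde\varphi_t(z):=(S_te_1)(z)/(S_te_0)(z)$, defined for all $t\ge0$ wherever $S_te_0$ does not vanish and coinciding with $\varphi_t$ on the range of the local flow. Applying the local representation to $e_n=e_1^{\,n}$ yields, on the open set $\{z:\tau(z)>t\}$, the identity $(S_te_n)\,(S_te_0)^{n-1}=(S_te_1)^n$; as both sides lie in $Hol(\mathbb D)$ and $\mathbb D$ is connected, this persists on all of $\mathbb D$ by the identity theorem. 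Evaluating at $z$, using $|(S_te_n)(z)|\le\|\delta_z\|_{X^\prime}\,\|S_t\|\,\|e_n\|_X$, extracting $n$-th roots and letting $n\to\infty$, the hypothesis $\underset{n}{\overline{\lim}}\,\|e_n\|_X^{1/n}\le 1$ forces $|\tilde\varphi_t(z)|\le 1$. By the open mapping theorem $\tilde\varphi_t$ is then a holomorphic self-map of $\mathbb D$; since $\tilde\varphi_0=\mathrm{id}$, the map $t\mapsto\tilde\varphi_t(z)$ is continuous, and the semigroup law $S_{t+s}=S_tS_s$ transfers to $\tilde\varphi_{t+s}=\tilde\varphi_t\circ\tilde\varphi_s$, so $(\tilde\varphi_t)_{t\ge0}$ is a semiflow of self-maps of $\mathbb D$, which is automatically defined for all $t\ge0$; thus it extends $(\varphi_t)$ and $\tau\equiv\infty$. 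The delicate point I would have to secure carefully is exactly the passage from \emph{each} $\tilde\varphi_t$ being a self-map \emph{wherever defined} to the flow surviving for \emph{all} times at \emph{every} point --- in particular the non-vanishing of $S_te_0$ and the behaviour as $t$ approaches a putative finite lifetime --- and it is precisely here that I would lean on the machinery of \cite{Gutierrez21}.

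Once globality is in hand, the remainder is identical to the proof of Theorem~\ref{t3-5}: the local representation becomes the global identity $S_tf=m_t\,f\circ\varphi_t$ for all $t\ge0$, and the equality $\mathcal D=D(\Gamma)$ is obtained as there --- the inclusion $\mathcal D\subset D(\Gamma)$ by differentiating $t\mapsto m_t(z)f(\varphi_t(z))$ at $t=0$, and the reverse inclusion by introducing the unweighted semigroup $T_tf=f\circ\varphi_t$, identifying its generator as $f\mapsto Gf^\prime$, and integrating $\frac{d}{ds}\bigl[m_s(z)f(\varphi_s(z))\bigr]=m_s(z)h(\varphi_s(z))$ for $h=gf+Gf^\prime\in X$. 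This gives that $(S_t)_{t\ge0}$ is the weighted composition semigroup with weight $m_t$ and symbol $\varphi_t$, as claimed.
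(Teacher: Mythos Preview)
The paper does not give an independent proof of Theorem~\ref{t3-6}: having established just before the statement that the condition $\overline{\lim}_n \|e_n\|_X^{1/n}\le 1$ is equivalent to the continuous embedding $Hol(\overline{\mathbb D})\hookrightarrow X$, it simply records Theorem~\ref{t3-6} as a restatement of \cite[Theorem~3.1]{Gutierrez21} under the translated hypotheses. Your first route --- invoke the external theorem through the established equivalence --- is therefore exactly what the paper does, and nothing more is required.

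Your intrinsic sketch goes well beyond the paper. The idea of recovering the symbol as $\tilde\varphi_t=(S_te_1)/(S_te_0)$, propagating the identity $(S_te_n)(S_te_0)^{n-1}=(S_te_1)^n$ from the local-flow region to all of $\mathbb D$ by the identity theorem, and then using the $n$-th-root estimate together with $\overline{\lim}_n\|e_n\|_X^{1/n}\le 1$ to force $|\tilde\varphi_t|\le 1$, is indeed the mechanism behind \cite{Gutierrez21}; you correctly flag the delicate passage (non-vanishing of $S_te_0$, the semiflow law for $\tilde\varphi_t$, and the actual contradiction at a putative finite lifetime) as the place where one must still lean on that paper. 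So your proposal is sound, but in the end both of your branches reduce to citing \cite{Gutierrez21}, which is all the present paper does.
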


 Both points of view are interesting. Indeed, on the one hand if the norm of $X$ is expressed by means of an integral then the continuous injection will be easier to verify directly and on the other hand, if the norm is expressed with the Taylor coefficients then it will be easier to check $\underset{n}{\overline{\lim}} \Vert  e_n\Vert  _X^{\frac{1}{n}}  \leq 1$. To conclude, note that for the Hardy space $H^2(\mathbb{D})$, for the Bergman spaces $A^p(\mathbb{D}) \ (p\geq 1)$ and the Dirichlet space $\mathcal{D}$, the conditions of Theorem \ref{t3-6} are satisfied.

\subsection*{Acknowledgment}
The author wishes to thank Jonathan Partington for the second part of the proof of the characterization of Banach spaces $X$ satisfying $Hol(\overline{\mathbb{D}}) \hookrightarrow X$.

% ------------------------------------------------------------------------
\end{document}